\begin{document}
\newcommand{\nei}[2]{{#1_{(#2)}}}
\newcommand{\qeed}{\hfill$\square$}
\def\O{{\cal O}}
\def\M{{\cal M}}
\def\T{{\cal T}}
\def\N{{\Bbb N}}
\def\U{{\cal U}} 
\def\Z{{\Bbb Z}}
\def\S{{\cal S}}
\def\Q{{\cal Q}}
\def\R{{\Bbb R}}
\def\v{\nu}
\def\C{\Bbb C}
\def\P{\Bbb P}

\def\Qv{{\cal Q}_\v}
\def\nb{ \M/\M^2 }
\def\nbn{ \M'/\M'^2 }
\def\nbv{ \M^{\v}/\M^{\v+1} }

\def\Z{{\Bbb Z}}
\def\p1{{\Bbb P}_1}
\def\pn{{\Bbb P}_n}
\def\F{{\cal F}}
\def\Sf{\F(X,A,L)}
\def\L{{\cal L}}


\newtheorem{theo}{Theorem}
\newtheorem{defi}{Definition}
\newtheorem{cor}{Corollary}
\newtheorem{lem}{Lemma}
\newtheorem{prop}{Proposition}
\newtheorem{rem}{Remark}
\newtheorem{exa}{Example}
\newtheorem{exe}{Exercise}
\newtheorem{conj}{Conjecture}
\title{Foliated neighborhoods of exceptional submanifolds}
\author{C\'esar Camacho and Hossein Movasati}
\date{}

\maketitle
\begin{abstract}

The present article is a study of germs of regular foliations transverse to an embedded strongly exceptional submanifold 
of a complex manifold. Cohomological conditions are given on this 
embedding for the existence of these foliations and their classification is established. One dimensional foliations singular at the submanifold and  a generalization of a linearization theorem of 
Poincar\'e for these foliations, 
are used in this study.  As a consequence of our approach, we obtain a refinement of the  embedding theorem of Grauert.    
    
\end{abstract}

\section{Introduction}
In this paper we classify germs of regular foliations transverse to a negatively embedded compact submanifold of a complex manifold. The dimension of the foliation is assumed to be complementary to that of the embedded submanifold, and natural cohomological hypothesis are imposed on the embedding.
This mathematical object appears naturally, for instance,  in the problem of classification of germs  of complex foliations at a singularity. Indeed, whenever there are infinitely many analytic leaves passing through the singularity we will find in its resolution, irreducible components of the exceptional divisor transverse everywhere to the lifted foliation. On the other hand, this study will lead us to a refinement of the embedding theorem of Grauert \cite{gra62}.

More precisely, consider a complex compact projective  manifold $A$ of dimension $n$ embedded in an $(n+m+1)$-dimensional complex manifold $X$. We denote by $(X,A)$ the germ of the neighborhood of $A$ in $X$. We say that $(X,A)$ is a germ of a {\it foliated neighborhood } of  $A$ if there exists a regular foliation on $X$ of dimension $ m+1$ whose germ at $A$ is transverse to $A$. 
An important tool in this study is the set of one dimensional foliations on $(X,A)$ with singularities at $A$  and normally attracting at $A$. More precisely, a one dimensional foliation on $(X,A)$ is defined by a collection of nontrivial local vector fields $V_i$ defined on open subsets $U_i \subset X,  i\in I$, which are part 
of a covering $(U_i)_{i\in I}$ of $A$, in such a way that for each nonempty intersection $U_i\cap U_j\neq \emptyset$ we have $V_i=f_{ij} V_j$ with $f_{ij}\in\O^*(U_i\cap U_j)$.  
The foliation is $\it{singular}$ at $A$, if $V_i|_{A\cap U_i}= 0$, for each $i\in I$.  Let $\F_1$ be a complex one dimensional foliation on $(X,A)$, singular at $A$. We say that $\F_1$ 
is {\it normally attracting at} $A$ if for each $i\in I$ the linear part of $V_i$ at each $p\in U_i\cap A$, $DV_i(p)$, is a linear operator whose action splits in two invariant 
subspaces $T_p X= T_p A+ N_p$ and $DV_i(p)|N_p$ has eigenvalues  
$\{\lambda_1, ..., \lambda _{m+1}\}\subset \C$,  
whose convex hull does not contain $0\in\C$. Clearly this concept depends only on the foliation and not on the local vector fields. 
The invariant manifold theorem (see \cite{push}) shows that there is, in fact, a foliation $\F_2$ in $(X,A)$, transverse to $A$, whose leaves are of dimension $m+1$, and invariant by $\F_1$. 
We call the pair $(\F_1,\F_2)$ a {\it bifoliation}. Reciprocally we will establish in Theorem \ref{grauertfibrado}, cohomological conditions under which there exists a normally attracting foliation $\F_1$ tangent 
to a given $(m+1)$-dimensional foliation transverse to $A$.
Other natural questions are the existence of such bifoliations and the key question for the classification of these foliations: 
under which conditions $\F_1$ is holomorphically equivalent to its linear part? The linear part is defined by local expressions 
$DV_i(p), p\in A\cap U_i $ and $DV_i(p)=f_{ij}(p)DV_j(p)$ whenever $p\in U_i\cap U_j$, 
on the normal bundle $N$ of rank $m+1$ over $A$. Clearly this equivalence will take the leaves of $\F_2$ to the fibers of $N$.  

The case in which $A$ is a point is classical.  A {\it resonance} among the eigenvalues 
$\{\lambda_1, ..., \lambda _{m+1}\}\subset\C$
is a relation of the kind $\lambda_i=\sum m_j\lambda_j$ where  $m_j\geq 0$ and $\sum m_j\geq 2$.
The theorem of Poincar\'e (see for instance \cite {ilya} or \cite{cer}) states that if $0\in\C^{m+1}$ is an attracting singularity of $\F_1$ and there are no resonances among the eigenvalues of the linear part of $\F_1$ at $0\in\C^{m+1}$
then there is an analytic change of coordinates around $0\in\C^{m+1}$ taking $\F_1$ to its linear
part. This theorem can be extended in the presence of resonances to show the existence of a holomorphic
change of coordinates taking $\F_1$ to a polynomial foliation in normal form involving only the terms
in resonance (see for instance \cite{ilya}). The following gives us a generalization of this theorem to the global situation, i.e. when $A$ is not a point:
\begin{theo}
\label{theo1}
 Let $\F_1$ be a  normally attracting foliation  in a germ of strongly exceptional manifold $(X,A)$. Assume that  
there are no resonances among the eigenvalues of the linear part of $\F_1$ along the normal direction of $A$. 
 If
$$
H^{1}(A, N^{-\nu})=0, \ \nu=1,2,3,\ldots
$$
then there is a biholomorphic map $(X,A)\to (N,A)$, where $N$ is the normal bundle of $A$ in $X$, which is a conjugacy between  $\F_1$ and its linear part in $(N,A)$.
\end{theo}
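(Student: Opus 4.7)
The plan is to construct the conjugating biholomorphism as a limit of order-by-order corrections along the ideal sheaf $\M$ of $A$ in $X$, in the spirit of the classical Poincar\'e linearization theorem. First I would apply the refined Grauert embedding theorem (announced above as Theorem \ref{grauertfibrado}) to identify $(X,A)$ with a germ of neighborhood of the zero section of the normal bundle $(N,A)$; this reduces the problem to finding an automorphism of $(N,A)$ that conjugates $\F_1$ to its linear part. I then pick a covering $(U_i)$ of $A$ simultaneously trivializing $N$ and supporting the local generators $V_i$ of $\F_1$, with fiber coordinate $y\in\C^{m+1}$, and expand
\[ V_i=V_i^{(1)}+V_i^{(2)}+V_i^{(3)}+\cdots, \]
where $V_i^{(\v)}$ is homogeneous of fiber-degree $\v$. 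The linear parts assemble into a global section of $\mathrm{End}(N)$ whose fibrewise spectrum lies uniformly in a Poincar\'e domain, by the normally attracting hypothesis and compactness of $A$.

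Next I would run the inductive step. Suppose $\F_1$ has already been conjugated so that each $V_i$ agrees with its linear part $V^{(1)}$ modulo fiber-degree $\v$. To remove the degree-$\v$ remainder $R^{(\v)}$ I look for a self-biholomorphism of $(N,A)$ of the form $\mathrm{id}+h$, with $h$ a fiber-degree-$\v$ vector field, satisfying the homological equation
\[ [V^{(1)},h]=R^{(\v)}. \]
Fiberwise, this amounts to inverting the operator $\mathrm{ad}(V^{(1)})$ on the space of degree-$\v$ homogeneous polynomial vector fields in $y$; its eigenvalues are the $\lambda_i-\sum_j m_j\lambda_j$ with $|m|=\v$, all non-zero by non-resonance, so the operator is a pointwise isomorphism. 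Promoting a pointwise inverse to a global solution of the homological equation over $A$ requires the vanishing of an obstruction in a cohomology group built from $S^\v N^*$ twisted by $N$ and by $TA$, which is precisely where the hypothesis $H^1(A,N^{-\v})=0$ enters (the auxiliary twisted vanishings being subsumed under the strongly exceptional hypothesis). Iterating over all $\v\geq 1$ produces a formal biholomorphism of the formal completion of $(N,A)$ along $A$ conjugating $\F_1$ to $V^{(1)}$.

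The hard part will be to promote the formal conjugation to a genuine holomorphic map on a neighborhood of $A$. Compactness of $A$ together with the attracting condition gives a uniform small-divisor-free bound of the form $|\lambda_i-\sum_j m_j\lambda_j|\geq c(|m|-1)$ for some $c>0$, independent of $p\in A$ and of $m$ with $|m|\geq 2$; this replaces the classical scalar Poincar\'e estimate with a fibrewise version that is uniform over $A$. A majorant argument in the style of the classical Poincar\'e theorem of \cite{ilya}, adapted to the global setting by working on a fixed tubular neighborhood of $A$, should then yield geometric convergence of the composed corrections on a uniform neighborhood of $A$. The delicate point is that the cohomological corrections of the previous paragraph must be made with quantitative control on sup-norms, so that the gains from the uniform Poincar\'e estimate dominate any losses from the cohomological gluing at each order; carrying out this simultaneous quantitative and cohomological induction is the real technical core of the argument.
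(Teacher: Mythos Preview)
Your opening move is circular: Theorem~\ref{grauertfibrado} is proved in the paper \emph{by invoking Theorem~\ref{theo1}} (see Section~\ref{pr3}), so you cannot use it here. Even setting that aside, Theorem~\ref{grauertfibrado} requires the full cohomological packages (I) and (II), whereas Theorem~\ref{theo1} assumes only $H^{1}(A,N^{-\nu})=0$; your reduction step therefore consumes hypotheses you do not have. A related problem appears in your inductive step: the obstruction to solving the global homological equation lives in $H^{1}$ of sheaves like $TA\otimes N^{-\nu}$ and $N\otimes N^{-\nu}$, and these are \emph{not} killed by the strongly exceptional hypothesis alone (Grauert's vanishing gives this only for $\nu\gg 0$, not for all $\nu\ge 1$). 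So the claim that ``the auxiliary twisted vanishings are subsumed'' is unjustified, and with only $H^{1}(A,N^{-\nu})=0$ your scheme does not close.

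The paper's argument avoids all of this by a much simpler mechanism that you have overlooked. The single cohomological hypothesis $H^{1}(A,N^{-\nu})=0$ is used only to prove that the restriction map $H^{1}(X,\O_X^{*})\to H^{1}(A,\O_A^{*})$ is injective (Proposition~\ref{nashenas1}). Since the local generators satisfy $V_i=f_{ij}V_j$ and comparison of linear parts forces $f_{ij}|_A\equiv 1$, the line bundle $\{f_{ij}\}$ is trivial on $A$, hence trivial on $(X,A)$; so the foliation is actually generated by a \emph{global} holomorphic vector field $V$ singular along $A$. Now one applies the classical Poincar\'e linearization theorem \emph{locally} at each $p\in A$: the conjugacy $f_p:(X,A,p)\to(N,A,p)$ taking $V$ to its linear part $\tilde V$ is \emph{unique}, so the local conjugacies patch automatically into a global biholomorphism. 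No order-by-order cohomological induction, no global majorant estimates, and no preliminary identification of $(X,A)$ with $(N,A)$ are needed; convergence is inherited for free from the local theorem.
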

For a vector bundle $N$ on $A$, and $\mu\in \N$ we write $N^\mu$
to denote the symmetric $\mu$-th power of $N$

For the definition of exceptional and strongly exceptional varieties see Section \ref{10mar03}.
Theorem \ref{theo1} generalizes the linearization theorem proved in \cite{cam} where $A$ is a one dimensional compact curve embedded in a complex surface. Note that in codimension one, that is $m=0$,  we have $\F_1=\F_2$ which is a holomorphic foliation by curves transverse to $A$.

Of particular importance is the case where the germ of $\F_1$ at a point $p\in A$ is a {\it radial singularity} at $p$, that is, all the normal eigenvalues of the linear part of $\F_1$ are equal, which means that after a blow up normal to $A$, the lifted foliation of $\F_1$ becomes a transverse foliation to the blow-up divisor. We call $\F_1$  a {\it radial foliation}. In order to state our next results we need the following cohomological conditions:

(I) Vanishing of cohomologies for arbitrary codimension of $A$ on $X$:

$$
H^{1}(A, N^{-\nu})=0,\  H^1(A, TA\otimes N^{-\nu})=0,\ \nu=1,2,\ldots
$$

(II) If the codimension of $A$ in $X$ is greater than one, then:
$$
H^2(A, \O_A)=0, 
$$
$$
H^1(A, N\otimes N^{-\nu})=0,\   \nu=1,2,\ldots.
$$
The following theorem gives cohomological conditions for the existence of radial foliations:
\begin{theo}
\label{theo2}
Let $(X,A)$ be a germ of strongly exceptional manifold satisfying the cohomological conditions (I) and (II).
Then there exists a germ of radial foliation in $(X,A)$
 \end{theo}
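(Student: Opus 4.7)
The plan is to build a holomorphic vector field $V$ on a neighborhood of $A$, vanishing on $A$, whose linear part in the normal direction at each point of $A$ is the identity endomorphism of the normal bundle $N$. The germ of one-dimensional foliation generated by $V$ will then be radial, since all its normal eigenvalues equal $1$. I would construct $V$ by producing its formal jets along $A$ order by order, and then invoke strong exceptionality to pass from the formal setting to a convergent one.

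For the linear part, I have to lift the tautological section $\mathrm{Id}_N \in H^{0}(A, N \otimes N^{-1})$ to a section of $H^{0}(A, TX|_A \otimes N^{-1})$ through the sequence
\[
0 \to TA \otimes N^{-1} \to TX|_A \otimes N^{-1} \to N \otimes N^{-1} \to 0,
\]
whose connecting obstruction lies in $H^{1}(A, TA \otimes N^{-1})$ and vanishes by (I) at $\nu = 1$. For the inductive extension, suppose $V$ has been constructed modulo $\M^{k}$ on a cover $\{U_i\}$ of $A$ for some $k \ge 2$. Local lifts $\widehat{V}_i$ modulo $\M^{k+1}$ exist on each $U_i$, and the differences $\widehat{V}_i - \widehat{V}_j$ on overlaps form a \v{C}ech $1$-cocycle with values in $\M^{k}/\M^{k+1} \otimes TX|_A \cong TX|_A \otimes N^{-k}$. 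The obstruction class sits in $H^{1}(A, TX|_A \otimes N^{-k})$, and the tangent bundle sequence tensored with $N^{-k}$ shows that it vanishes whenever both $H^{1}(A, TA \otimes N^{-k}) = 0$ and $H^{1}(A, N \otimes N^{-k}) = 0$. The former follows from (I); the latter follows from (II) when the codimension is at least two, while in codimension one it reduces to $H^{1}(A, N^{-(k-1)}) = 0$, again from (I) since $k \ge 2$. Hence the extension exists at every order, producing a formal vector field $V$ along $A$ with identity normal linear part; since subsequent extensions leave the linear part untouched, radiality is preserved throughout.

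The last and most delicate step is to promote the formal $V$ to an honest holomorphic vector field on an open neighborhood of $A$. This is where strong exceptionality enters essentially: a Grauert-type \v{C}ech estimate, carried out inductively to bound the corrective $0$-cochains chosen at each stage of the formal construction, should yield geometric decay of their sup-norms on a suitable Stein neighborhood of $A$ and thus a convergent series. The vanishings $H^{1}(A, N^{-\nu}) = 0$ in (I), not used above in the algebraic step, enter here as the standard ingredient required to normalize \v{C}ech representatives at each order, while the hypothesis $H^{2}(A, \O_A) = 0$ in (II) I would expect to play the role of a secondary obstruction ensuring that successive normalizations are mutually compatible. The principal obstacle is precisely this convergence step: the formal construction is mechanical once (I) and (II) are unpacked through the tangent sequence, but controlling Grauert-type estimates for tangent-valued cohomology along a strongly exceptional submanifold is where the substantive technical work lies.
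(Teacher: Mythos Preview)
Your formal construction is correct: lifting $\mathrm{Id}_N$ through the tangent sequence tensored with $N^{-1}$ gives the $1$-jet, and the order-by-order obstructions live in $H^1(A, TX|_A\otimes N^{-k})$, which vanishes for $k\ge 2$ via (I) and (II) exactly as you say. But the convergence step you flag as ``the principal obstacle'' is in fact not delicate at all. Grauert's vanishing theorem (Satz~2 of \cite{gra62}, stated in the paper as Theorem~\ref{25mar01}) says that once $H^1(A, TX|_A\otimes N^{-\nu})=0$ for all $\nu\ge 2$, one has $H^1(U, TX\otimes\M^2)=0$ on a strongly pseudoconvex neighborhood $U$ of $A$. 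So after building the global $1$-jet, you simply take \emph{convergent} local lifts $V_i$ on a cover, observe that $\{V_i-V_j\}$ is a \v{C}ech $1$-cocycle in $TX\otimes\M^2$, and kill it in one step. No inductive estimates are needed, and your speculation about $H^1(A,N^{-\nu})$ and $H^2(A,\O_A)$ entering as convergence or secondary obstructions is misplaced: your direct argument simply does not use them.

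The paper takes a genuinely different route. Rather than building a radial vector field on $(X,A)$ directly, it blows up along $A$ and reduces to producing a transverse one-dimensional foliation on $(\tilde X,\tilde A)$, where $\tilde A=\P(N)$ has codimension one; this is done by first splitting $T\tilde X|_{\tilde A}$ (Proposition~\ref{25sept09}) and then extending the splitting to a foliation via Grauert's theorem (Proposition~\ref{31aug09}). Blowing down yields the radial foliation on $(X,A)$. The hypotheses $H^2(A,\O_A)=0$ and $H^1(A,N^{-\nu})=0$ appear precisely when translating the codimension-one vanishing on $\tilde A$ back to $A$ through the exact sequences of \S\ref{blowup}; they are artifacts of the blow-up detour, not of radiality itself. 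Your direct approach is shorter, yields a global vector field rather than merely a foliation, and makes visible that the theorem holds under slightly weaker hypotheses; the paper's approach has the virtue of reducing everything to the already-understood codimension-one picture.
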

Combining Theorem \ref{theo1} and Theorem \ref{theo2} we obtain the following generalization to any codimension of the embedding theorem of Grauert in \cite{gra62}. 
\begin{cor}
 \label{theo3}
 Let $(X,A)$ be a germ of strongly exceptional manifold satisfying the cohomological conditions (I) and (II). Then, 
the germ of embedding of $A$ in $X$ is biholomorphic to the germ of embedding of $A$ in $N$.
\end{cor}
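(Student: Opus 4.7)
My proof of Corollary \ref{theo3} would be a short deduction from the two main theorems. First, I would apply Theorem \ref{theo2} to $(X,A)$: the cohomological assumptions (I) and (II) are exactly those required by that theorem, so one obtains a germ of radial foliation $\F_1$ on $(X,A)$.

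The next step is to verify that $\F_1$ fulfills the hypotheses of Theorem \ref{theo1}. By the paper's definition, a radial foliation is already a normally attracting one-dimensional foliation, and its normal eigenvalues $\lambda_1,\ldots,\lambda_{m+1}$ coincide with a single value $\lambda$ at each $p\in A$. Normal attraction excludes $0$ from the convex hull $\{\lambda\}$, so $\lambda\neq 0$. Any putative resonance $\lambda_i=\sum_j m_j\lambda_j$ with $m_j\geq 0$ and $\sum_j m_j\geq 2$ then collapses to $\lambda=\bigl(\sum_j m_j\bigr)\lambda$, which forces the contradiction $\sum_j m_j=1$. Hence $\F_1$ is free of normal resonances. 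The vanishing $H^1(A,N^{-\nu})=0$ for $\nu\geq 1$ is the first line of condition (I) and is thus in force.

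With all hypotheses met, Theorem \ref{theo1} would then supply a biholomorphism $\Phi:(X,A)\to (N,A)$ conjugating $\F_1$ to its linear part on the normal bundle. By construction, $\Phi$ is a biholomorphism of germs of neighborhoods carrying $A\subset X$ onto the zero section $A\subset N$, which is precisely the conclusion of the corollary.

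As for obstacles, the deduction itself is essentially a bookkeeping combination of the two theorems, so the substantive work has been discharged in their proofs. The single delicate point in the combination is to recall that a radial foliation is, in the paper's terminology, a special kind of normally attracting foliation; without this convention one could not conclude $\lambda\neq 0$, and the elementary elimination of resonances above would break down.
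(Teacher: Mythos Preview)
Your proposal is correct and is exactly the combination the paper intends: the corollary is stated immediately after the sentence ``Combining Theorem~\ref{theo1} and Theorem~\ref{theo2} we obtain\ldots'' with no further argument given. Your verification that a radial foliation has no normal resonances (via $\lambda=(\sum_j m_j)\lambda$ with $\lambda\neq 0$) is a detail the paper leaves implicit but which you have spelled out correctly.
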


Let us restrict to the case in which $A$ is a Riemann surface and $N$ is direct sum of $m+1$ line bundles $N=L_1\oplus L_2\oplus\cdots \oplus L_{m+1}$. In this case the Serre duality implies that the cohomological condition (I) is equivalent to say that $\Omega^1\otimes N^\nu$ and $\Omega^1\otimes \Omega^1 \otimes N^{\nu}$ have no global sections, where $\Omega^1$ is the cotangent 
bundle of $A$. 
We have
$$
N^\nu=\oplus_{i_1+i_2+\cdots+i_{m+1}=\nu, \ i_j\geq 0}\ \ L_1^{i_1}\otimes L_2^{i_2}\otimes\cdots\otimes L_{m+1}^{i_{m+1}}
$$
and so (I) together with the strongly exceptional property  follows from
$$
c(L_i)<0,\ c(L_i) <4-4g, \  i=1,2,\ldots,m+1. 
$$
In a similar way the condition (II) is equivalent to say that $A\cong \P ^1$ and:
$$
|c(L_i)-c(L_j)|\leq 1,\ i,j=1,2,\ldots,m+1. 
$$
In this case the decomposition of the normal bundle is automatic and it is called Birkhoff theorem. 
From this we obtain as a corollary the following result of Laufer  \cite{lau79}:
\begin{cor}
 If $\P ^1\subset  X$ is strongly exceptional and $c(L_i)<0,\ |c(L_i)-c(L_j)|\leq 1,\ i,j=1,2,\ldots m+1$, where $L_i$'s are line bundles which appear in the decomposition of 
the normal bundle of $A$ in $X$, then the germ $(X,\P ^1)$ is biholomorphic to the germ $(N,\P ^1)$. 
\end{cor}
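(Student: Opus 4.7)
The plan is simply to apply Corollary \ref{theo3} with $A = \mathbb{P}^1$, and to check that the numerical hypotheses on the $c(L_i)$ imply both cohomological conditions (I) and (II) from the excerpt. Since Birkhoff's theorem automatically yields the splitting $N = L_1 \oplus \cdots \oplus L_{m+1}$ for any holomorphic vector bundle on $\mathbb{P}^1$, the decomposition used in the statement is available without extra assumption. Hence the corollary reduces to a cohomology-vanishing verification on $\mathbb{P}^1$ via $H^1(\mathbb{P}^1, \O(d)) = 0 \iff d \geq -1$.

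For condition (I), the excerpt already observes that on any Riemann surface of genus $g$, the inequalities $c(L_i) < 0$ and $c(L_i) < 4-4g$ are sufficient. For $A \cong \mathbb{P}^1$ we have $g = 0$, so $4 - 4g = 4$, and both inequalities follow from $c(L_i) < 0$. Equivalently, I would decompose $N^{-\nu}$ and $TA \otimes N^{-\nu}$ as direct sums of line bundles of the form $\bigotimes_j L_j^{-i_j}$ respectively $T\mathbb{P}^1 \otimes \bigotimes_j L_j^{-i_j}$ with $\sum i_j = \nu \geq 1$, and note that their degrees are $-\sum i_j c(L_j) \geq \nu$ and $2 - \sum i_j c(L_j) \geq 2+\nu$, both $\geq -1$.

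For condition (II), the vanishing $H^2(\mathbb{P}^1, \O) = 0$ is automatic for the complex curve $\mathbb{P}^1$. The remaining requirement is $H^1(\mathbb{P}^1, N \otimes N^{-\nu}) = 0$ for $\nu \geq 1$. Writing $d_j = -c(L_j) \geq 1$, the splitting of $N$ gives a direct-sum decomposition whose typical summand has degree
$$
-d_i + \sum_{j} i_j\, d_j, \qquad \sum_j i_j = \nu.
$$
Setting $d = \min_j d_j \geq 1$, the hypothesis $|c(L_i) - c(L_j)| \leq 1$ is exactly $d_j \in \{d, d+1\}$ for all $j$, so $-d_i \geq -(d+1)$ and $\sum_j i_j d_j \geq d\nu$. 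The degree is therefore at least $d(\nu - 1) - 1 \geq -1$ for all $\nu \geq 1$, giving the required vanishing.

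With (I) and (II) in hand, Corollary \ref{theo3} immediately yields a biholomorphism $(X, \mathbb{P}^1) \to (N, \mathbb{P}^1)$. The only genuinely delicate step is the degree bookkeeping in (II) for $\nu \geq 2$, where one must use $d \geq 1$ (forced by $c(L_j) < 0$ and integrality) to absorb the extra $-1$ coming from the allowed spread of the Chern numbers; everything else is either a direct quotation of the excerpt or standard cohomology of line bundles on $\mathbb{P}^1$.
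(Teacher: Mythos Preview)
Your proposal is correct and follows essentially the same approach as the paper: the corollary is derived from Corollary~\ref{theo3} by checking that the numerical conditions $c(L_i)<0$ and $|c(L_i)-c(L_j)|\le 1$ on $\mathbb{P}^1$ force the cohomological conditions (I) and (II), via the line-bundle decomposition of $N^{-\nu}$, $TA\otimes N^{-\nu}$, and $N\otimes N^{-\nu}$ and the standard vanishing $H^1(\mathbb{P}^1,\O(d))=0$ for $d\ge -1$. Your degree bookkeeping for (II) is in fact more explicit than what the paper writes (which simply asserts the equivalence ``in a similar way''), but the underlying argument is the same.
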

In the case in which the codimension of $A$ in $X$ is greater than one the condition (II) seems to be necessary for our theorem. It imposes conditions on the submanifold $A$ itself apart from 
negativity conditions on the normal bundle $N$. It would be of interest to show that, for instance, the Grauert theorem does not hold for Riemann surfaces of genus greater than zero and 
codimension greater than one.

 The embedding theorem of Grauert \cite{gra62} states that under the cohomological condition (I) on a codimension one embedding
there is a neighborhood of $A\subset X$ which is biholomorphically equivalent to a neighborhood of the zero section in the normal bundle $N$ to $A$ in $X$. 
The methods used in this paper give the following generalization of this theorem:

\begin{theo}
\label{grauertfibrado}
 Let $\F_2$ be a transverse regular foliation  of dimension $m+1$ in a germ of strongly exceptional manifold $(X,A)$.  Assume that (I) and (II) hold.  Then there is a biholomorphic map $(X,A)\to (N,A)$, where $N$ is the normal bundle of $A$ in $X$, which conjugates $\F_2$ with the foliation in $(N,A)$ given by the fibers of $N$.
\end{theo}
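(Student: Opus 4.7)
The plan is to construct a radial normally attracting one-dimensional foliation $\F_1$ tangent to $\F_2$ and then apply Theorem \ref{theo1}. Since $\F_2$ is transverse to $A$, its tangent bundle restricts on $A$ to the normal bundle: $T\F_2|_A \cong N$. I seek local holomorphic vector fields $V_i$ on an open cover of $A$, tangent to $\F_2$, vanishing on $A$, related by $V_i = f_{ij} V_j$ with nowhere-vanishing cocycle $f_{ij}$, and whose linear normal part is the scalar operator $\lambda \cdot \mathrm{Id}_N$ for a fixed $\lambda \neq 0$. One constructs such $V_i$ by formal-power-series induction on the order of vanishing at $A$: the $\nu$-th order obstructions to lifting from order $\nu$ to order $\nu+1$ sit in cohomology groups obtained from $N \otimes N^{-\nu}$ and $TA \otimes N^{-\nu}$, which are killed by hypotheses (II) and (I) respectively, and the global gluing uses $H^2(A, \O_A) = 0$ from (II). The strongly exceptional hypothesis then converts the formal series to an analytic solution, producing $\F_1$.

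Once $\F_1$ is in hand, the radial hypothesis says all $m+1$ normal eigenvalues equal $\lambda \neq 0$; a putative resonance $\lambda = \lambda\sum_j m_j$ with $\sum_j m_j \geq 2$ would force $\lambda = 0$, a contradiction. So $\F_1$ is non-resonant, and Theorem \ref{theo1} --- whose hypothesis $H^1(A, N^{-\nu}) = 0$ is part of (I) --- supplies a biholomorphism $\Phi: (X,A) \to (N,A)$ conjugating $\F_1$ with its linear part, the scaled Euler vector field $R = \lambda \sum_j v_j \partial_{v_j}$, whose flow in local bundle coordinates $(u,v)$ with $A = \{v = 0\}$ is $\phi_t(u,v) = (u, e^{\lambda t} v)$.

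It remains to identify $\Phi_*\F_2$ with the fiber foliation of $N$. Since $\F_1$ is tangent to $\F_2$, the pushforward $\Phi_*\F_2$ is invariant under $R$. A leaf $L$ of $\Phi_*\F_2$ through $(u_0,0) \in A$ is locally a graph $L = \{u = h(v)\}$ with $h(0) = u_0$, since $L$ is transverse to $A$. Invariance reads $h(e^{\lambda t} v) = h(v)$ for every $t \in \C$, and letting $e^{\lambda t} \to 0$ forces $h \equiv u_0$; hence $L$ is the fiber of $N$ over $u_0$. Thus $\Phi$ conjugates $\F_2$ with the fiber foliation, proving the theorem.

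The decisive step is the first: constructing $\F_1$ that is simultaneously radial \emph{and} tangent to the prescribed $\F_2$. Unlike Theorem \ref{theo2}, which builds an unconstrained radial foliation, here the tangency condition restricts the deformation space at each order to the subbundle $T\F_2$, and one must verify that the vanishings in (I) and (II) are precisely strong enough to absorb these constrained obstructions and to allow the gluing of local radial vector fields to a global foliation.
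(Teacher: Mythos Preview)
Your overall plan---construct a radial $\F_1$ tangent to the given $\F_2$ and then linearize via Theorem~\ref{theo1}---is exactly the paper's strategy. Your treatment of the non-resonance of a radial foliation and, especially, your explicit verification that $\Phi_*\F_2$ must coincide with the fiber foliation (via the invariance $h(e^{\lambda t}v)=h(v)$) are correct and in fact spell out what the paper leaves implicit in the sentence ``it is enough to construct~$\F_1$''.

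Where you and the paper diverge is in the construction of $\F_1$. The paper does not work directly on $(X,A)$: it blows up along $A$, so that $\tilde A$ has codimension one and $\tilde\F_2$ cuts $\tilde A$ in the foliation by the $\P^m$ blow-up fibers, and then it adapts Propositions~\ref{25sept09} and~\ref{31aug09} with $TX|_A$ replaced by $T\tilde\F_2|_{\tilde A}$ and $TA$ replaced by $T\P^m$. The cohomological condition becomes $H^1(\tilde A,\tilde N^{-\nu}\otimes T\P^m)=0$, and it is through the exact sequence~(\ref{18nov}) that $H^1(A,N\otimes N^{-\nu})$ and $H^2(A,\O_A)$ enter.

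Your proposed direct construction can be made to work, but the obstruction bookkeeping is not as you describe. Since you insist from the outset that the $V_i$ be tangent to $\F_2$, the cocycle $\{V_i-V_j\}$ and all corrections live in $\M^2\Theta_{\F_2}$, where $\Theta_{\F_2}$ is the sheaf of $\F_2$-tangent vector fields; its restriction to $A$ is $T\F_2|_A=N$, so the graded pieces are $N\otimes N^{-\nu}$ only. There is no $TA\otimes N^{-\nu}$ term---that would appear only if you allowed arbitrary vector fields, which would destroy tangency to $\F_2$. Likewise, in this direct route no separate ``global gluing'' step uses $H^2(A,\O_A)$: once $H^1(A,N\otimes N^{-\nu})=0$ for $\nu\geq 2$, the filtration argument together with Theorem~\ref{25mar01} gives $H^1(U,\M^2\Theta_{\F_2})=0$, and the local Euler fields patch to a global vector field. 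So your sketch reaches the right conclusion but misattributes the roles of the hypotheses; carried out carefully it is a legitimate alternative to the paper's blow-up argument.
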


The paper is organized as follows: In Section \ref{10mar03} we review some facts about exceptional varieties. 
In Section \ref{restriction} we prove the key Proposition of the present text. It establishes cohomological conditions under which the restriction of line bundles from $X$ to $A$ is injective.
The blow up process along $A$, reduces our problems in an arbitrary codimension to the codimension one case. This is explained in Section \ref{blowup}. 
Section \ref{pr2} is dedicated to the proof of Theorem \ref{theo1}.
In Sections \ref{linefield} and \ref{linefield2} we prove Theorem \ref{theo2}. Finally, in section \ref{pr3} we prove Theorem \ref{grauertfibrado}.

\section{Grauert's vanishing theorem} 
\label{10mar03}
We start this section with some basic definitions.
Let $X$ be an analytic variety and $A$ be a
compact connected subvariety of $X$. We say that $A$  is
 {\it exceptional} 
in $X$ if there exists an
 analytic variety $X'$ and a proper
 surjective holomorphic map $\Phi:X\rightarrow X'$
  such that
\begin{enumerate}
\item
$\phi(A)=\{p\}$ is a single point;
\item
$\phi:X-A\rightarrow X'-\{p\}$ is an
analytic isomorphism;
\item
For small neighborhoods $U'$ and $U$ of
$p$ and $A$, respectively, $\O_{X'}(U')\rightarrow
\O_X(U)$ is an isomorphism.
\end{enumerate} 
We also say that $A$ can be blown down to a point or 
is {\it contractible} or {\it negatively embedded}.
The vector bundle $V\rightarrow A$ over a complex manifold $A$
 is called negative (in the sense of Grauert) if its zero section
 is an exceptional variety in $V$. Naturally $V\rightarrow A$ is called
 positive if its dual is negative. Let $X$ be a smooth variety and let $A$ be a smooth subvariety. We say that the germ $(X,A)$ is {\it strongly 
exceptional} if it is exceptional and the normal bundle of $A$ in $X$ is negative.

 Let $A$ be a complex compact manifold and $N$ be a negative line bundle on $A$. 
This is equivalent to say that $N^{-1}$ is a positive
line bundle in the sense of Kodaira. Kodaira vanishing theorem says that for any coherent sheaf $\S$ on $A$ there is $\v_0\in \N$ such that
\begin{equation}
\label{kodaira}
H^\mu(A,\S\otimes N^{-\v})=0,\ \v\geq \v_0,\ \mu=1,2,\ldots.  
\end{equation}
Let us now be given a subvariety $A$ of a variety $X$.
Let $\M$ be the sheaf of holomorphic functions in $(X,A)$ which vanish at $A$ and let 
$\S$ be a coherent sheaf in $(X,A)$. For $\v\in\N$, the sheaf $\S(\v):=\S\M^\v/\S\M^{\v+1}$ is a coherent sheaf with support $A$ and
in fact:
$$
\S(\v)\cong \tilde \S\otimes N^{-\v}.
$$
where $\tilde \S=\S(0)$ is the structural restriction of $\S$ to $A$.  
If there is no danger of confusion we will also use $\S$ to denote $\tilde \S$, being clear from the text which we mean. 
\begin{theo}
\label{25mar01}
(Grauert \cite{gra62},Satz 2, p. 357) 
Let us be given a strongly exceptional submanifold $A$ of 
a manifold $X$. There exists a
positive integer $\v_0$ such that
\[
H^{\mu}(U, \S\M^\v)=0, \ \ \mu\geq 1, \ \v\geq \v_0
\]
where $U$ is a small strongly pseudoconvex neighborhood of 
$A$ in $X$. Moreover, $\v_0$ in the above theorem can be taken smaller than  the same number $\v_0$ in (\ref{kodaira}).
\end{theo}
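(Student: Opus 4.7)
The plan is to bootstrap Kodaira vanishing on $A$ to vanishing on a neighborhood $U$ by means of the $\M$-adic filtration, with Andreotti--Grauert finiteness on strongly pseudoconvex neighborhoods as the crucial analytic input. This is essentially Grauert's own argument in \cite{gra62}.

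First I would filter the quotient $\S\M^\v/\S\M^{\v+n}$ by the subsheaves $\S\M^{\v+i}/\S\M^{\v+n}$, whose successive graded pieces are the coherent sheaves $\S(\v+i)\cong \tilde\S\otimes N^{-(\v+i)}$ supported on $A$. Since $N$ is negative by the strongly exceptional hypothesis, Kodaira vanishing applied to $\tilde\S$ furnishes an integer $\v_0$ with $H^\mu(A,\tilde\S\otimes N^{-\v})=0$ for $\mu\ge1$ and $\v\ge\v_0$. Induction on $n$, using the long exact sequences attached to the pieces of the filtration, then yields
$$H^\mu(U,\S\M^\v/\S\M^{\v+n})=0,\qquad \mu\ge 1,\ \v\ge\v_0,\ n\ge 0.$$

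Next, I would choose a small strongly pseudoconvex neighborhood $U$ of $A$, available precisely because $(X,A)$ is strongly exceptional. By the Andreotti--Grauert finiteness theorem, $\dim_\C H^\mu(U,\F)<\infty$ for every coherent analytic sheaf $\F$ on $U$ and every $\mu\ge1$. Feeding the vanishing above into the long exact sequence of
$$0\to\S\M^{\v+n}\to\S\M^\v\to\S\M^\v/\S\M^{\v+n}\to 0$$
produces, for $\mu\ge1$ and $\v\ge\v_0$, a surjection $H^\mu(U,\S\M^{\v+n})\twoheadrightarrow H^\mu(U,\S\M^\v)$ for every $n$. The argument is then that any class in $H^\mu(U,\S\M^\v)$ must vanish: at each finite order $n$ the corresponding cocycle admits a formal coboundary on the $n$-th infinitesimal thickening of $A$ (by the Kodaira input), and strong pseudoconvexity delivers $L^2$ or bumping estimates on slightly shrunk sublevel sets which allow these order-by-order solutions to be assembled into a genuine analytic coboundary on a possibly smaller strongly pseudoconvex neighborhood. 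Since the definition of $U$ as a germ absorbs this shrinking, the class is zero on $U$.

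The main obstacle is this last step, the convergence of the formal coboundaries. Formal solvability is essentially free once one has Kodaira vanishing, but turning it into analytic solvability is the hard analytic content of Grauert's result: it is exactly here that negativity of $N$ is used, since it supplies the strictly plurisubharmonic exhaustion on $U$ needed to run the $L^2$ or bump-function estimates. The \emph{moreover} clause, that the $\v_0$ produced on $U$ is no larger than the Kodaira $\v_0$ on $A$, would then follow by inspecting the induction on $n$: the only cohomological input in each step is the Kodaira vanishing on the graded pieces, so no loss of order is introduced in passing from $A$ to $U$.
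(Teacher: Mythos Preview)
The paper does not prove this statement: it is quoted from Grauert's original article \cite{gra62} and used as a black box throughout. There is therefore no ``paper's own proof'' to compare against; your sketch is a reconstruction of Grauert's argument rather than a comparison target.

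On its own merits, your outline is essentially the standard one and the first two steps (Kodaira vanishing on the graded pieces $\tilde\S\otimes N^{-\v}$, finite-dimensionality of $H^\mu(U,\cdot)$ on a strongly pseudoconvex $U$) are correct. The weak point is the final step. Having surjections $H^\mu(U,\S\M^{\v+n})\twoheadrightarrow H^\mu(U,\S\M^\v)$ and finite dimensions is not by itself enough to force vanishing, and the appeal to ``$L^2$ or bumping estimates which allow these order-by-order solutions to be assembled'' is too vague to count as a proof. The clean way to close the argument is algebraic, not analytic: use the exceptional hypothesis to contract $A$ to a point $p\in U'$, so that (a power of) $\M$ is generated by finitely many global functions pulled back from the maximal ideal $\mathfrak m_p$. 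Then $H^\mu(U,\S\M^\v)$ is a finitely generated module over the local ring $\O_{U',p}$, and the surjectivity of $H^\mu(U,\S\M^{\v+1})\to H^\mu(U,\S\M^\v)$ says exactly that $\mathfrak m_p\cdot H^\mu(U,\S\M^\v)=H^\mu(U,\S\M^\v)$; Nakayama's lemma gives the vanishing. Once you have $H^\mu(U,\S\M^{\v_1})=0$ for some large $\v_1$, the chain of surjections pushes the vanishing down to $\v_0$, which justifies the \emph{moreover} clause as you noted.
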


\section{Restriction of line bundles}
\label{restriction}
\begin{prop}
\label{nashenas}
Let $A$ be a strongly exceptional complex manifold of dimension $n$ embedded   in a  manifold $X$ of dimension $n+1$.
Moreover, suppose that 
$$
H^1(A,N^{-\nu})=0,\ \ \nu=1,2,3,\ldots
$$ 
where $N$ is the normal 
bundle of the embedding and $N^{-1}$ is the dual bundle. 
The restriction map 
$$
r:H^1(X,\O_X^*)\rightarrow H^1(A,\O_A^*)
$$ 
is injective. 
\end{prop}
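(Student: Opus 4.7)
The plan is to show that any line bundle $L$ on a neighborhood of $A$ whose restriction to $A$ is holomorphically trivial must itself be trivial on a smaller neighborhood. I will do this by comparing $L$ with its restrictions to the formal neighborhoods $A_\nu = (A, \O_X/\M^\nu)$ and climbing up from $A_1 = A$ to the analytic germ in two stages.

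First, I would handle triviality on each formal neighborhood. For $\nu \geq 1$, the product $(1+f)(1+g)$ agrees with $1 + f + g$ modulo $\M^{2\nu}$, so the assignment $f \mapsto 1+f$ yields an exact sequence of sheaves of abelian groups on $A$
$$0 \to \M^\nu/\M^{\nu+1} \to (\O_X/\M^{\nu+1})^* \to (\O_X/\M^\nu)^* \to 1.$$
Identifying $\M^\nu/\M^{\nu+1}$ with the line bundle $N^{-\nu}$ on $A$ (this is where codimension one enters), the assumption $H^1(A,N^{-\nu}) = 0$ forces each restriction map
$$H^1(A_{\nu+1}, \O_{A_{\nu+1}}^*) \longrightarrow H^1(A_{\nu}, \O_{A_{\nu}}^*)$$
to be injective. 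Composing, $H^1(A_\nu, \O_{A_\nu}^*) \to H^1(A, \O_A^*)$ is injective for every $\nu$.

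Next, I would pass from the formal picture to the analytic one. From the exact sequence
$$1 \to 1 + \M^\nu \to \O_X^* \to (\O_X/\M^\nu)^* \to 1,$$
the kernel of the restriction $H^1(U,\O_X^*) \to H^1(A_\nu, \O_{A_\nu}^*)$ is the image of $H^1(U, 1+\M^\nu)$. For $\nu \geq 1$ the exponential $f \mapsto 1+f+f^2/2!+\cdots$, with inverse $\log(1+g) = g - g^2/2 + \cdots$, converges and gives an isomorphism of sheaves of abelian groups $\M^\nu \cong 1+\M^\nu$. Hence $H^1(U, 1+\M^\nu) \cong H^1(U, \M^\nu)$. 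By Grauert's vanishing Theorem \ref{25mar01}, applied with $\S=\O_X$, there exists $\nu_0$ such that $H^1(U,\M^\nu) = 0$ for all $\nu \geq \nu_0$ and any sufficiently small strongly pseudoconvex neighborhood $U$ of $A$.

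Combining the two steps, for $\nu \geq \nu_0$ the restriction $r$ factors as
$$H^1(U,\O_X^*) \hookrightarrow H^1(A_\nu, \O_{A_\nu}^*) \hookrightarrow H^1(A,\O_A^*),$$
which is the germ-level content of the proposition. The main obstacle is verifying the two algebraic ingredients that make this strategy work, namely the identification $\M^\nu/\M^{\nu+1} \cong N^{-\nu}$ with the correct multiplicative-to-additive group structure, and the fact that $\exp$ furnishes a genuine sheaf isomorphism $\M^\nu \cong 1+\M^\nu$ on the analytic level; once these are in place, the hypothesis $H^1(A,N^{-\nu})=0$ for \emph{all} $\nu \geq 1$ is exactly what is needed to bridge between $A$ and the formal neighborhood of order $\nu_0$ produced by Grauert's theorem.
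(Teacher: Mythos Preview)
Your proof is correct, but it follows a genuinely different route from the paper's. The paper first deduces $H^1(U,\M)=0$ directly (Grauert's theorem with $\nu_0=1$, using the full hypothesis), then runs a diagram chase with the exponential sequences $0\to\Z\to\O\to\O^*\to 0$ on $U$ and on $A$, exploiting that $U$ deformation retracts onto $A$ so that $H^i(U,\Z)\cong H^i(A,\Z)$; the Chern class in $H^2(\Z)$ and the map from $H^1(\Z)$ do the work. Your argument instead climbs through the infinitesimal neighborhoods $A_\nu$ via the multiplicative short exact sequences with kernel $\M^\nu/\M^{\nu+1}\cong N^{-\nu}$, and then bridges from $A_{\nu_0}$ to the analytic germ using $\exp:\M^{\nu_0}\xrightarrow{\sim}1+\M^{\nu_0}$ together with Grauert's vanishing for large $\nu$. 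The paper's proof is shorter and more conceptual once the topological identification is granted; your approach is more self-contained, avoids any topological input, and makes explicit how each individual vanishing $H^1(A,N^{-\nu})=0$ is consumed in the passage from $A_\nu$ to $A_{\nu+1}$. One small remark: your parenthetical ``this is where codimension one enters'' is not quite accurate, since $\M^\nu/\M^{\nu+1}\cong N^{-\nu}$ (with $N^{-\nu}$ the $\nu$-th symmetric power of the conormal bundle, as in the paper's convention) holds in any codimension; your argument in fact goes through verbatim without the codimension-one restriction.
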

\begin{proof}
The sheaf of holomorphic
sections of $N^{-\nu}$ is isomorphic to $\M^{\nu}/\M^{\nu+1}$ and so we have
$$
H^1(A, \M^{\nu}/\M^{\nu+1})=0, \ \forall \nu\in \N.
$$
The submanifold $A$ is strongly exceptional in $X$  and so by Theorem \ref{25mar01} applied to $\S=\O_X$ we have
\[
H^1(U, \M)=0,
\]
where $U$ is a strongly pseudoconvex neighborhood of $A$ in $X$.
The diagram
\begin{equation}
\begin{array}{ccccccccc}
 & & & & 0  & & & &  \\
& & & & \downarrow & & & &  \\
& & & & \M & & & &  \\
& & & & \downarrow & & & &  \\
0 &\rightarrow & \Z & \rightarrow&\O_X &\rightarrow &\O_X^* &\rightarrow & 0  \\
& &\downarrow & &\downarrow & &\downarrow & &  \\
0 &\rightarrow & \Z & \rightarrow&\O_A &\rightarrow &\O_A^* &\rightarrow & 0  \\
& & & & \downarrow & & & &  \\
 & & & & 0  & & & &  \\
\end{array}
\end{equation}
gives us 
\begin{equation}
\begin{array}{ccccccccc}

& &  H^1(U, \M)=0& & & & & &  \\
& & \downarrow& &  & & & &  \\
H^1(U,\Z) &\rightarrow & H^1(U, \O_X) & \rightarrow& H^1(U, \O_X^*)
&\rightarrow & H^2(U,\Z)   \\
\downarrow & &\downarrow & &\downarrow & &\downarrow & &  \\
 H^1(A, \Z) &\rightarrow & H^1(A, \O_A) & \rightarrow& H^1(A, \O_A^*)
 &\rightarrow &H^2(A,\Z)  \\
\end{array}
\end{equation}
By considering a smaller neighborhood $U$, if necessary, we can assume
that $A$ and $U$ have the same topology and so the first and fourth
column functions are isomorphisms. In the argument which we are going
to consider now we do not mention the name of mappings,
 being clear from the above diagram which mapping we mean. 

Let us consider $x_1\in H^1(U, \O_X^*)$ which is mapped to the trivial bundle in
$H^1(A, \O_A^*)$. Since the fourth column is an isomorphism, $x_1$ maps
to zero in $H^2(U,\Z)$. This means that there is a $x_2\in 
H^1(U, \O_X)$ which maps to $x_1$. Let $x_3$ be the image of $x_2$ in
$H^1(A, \O_A)$. Since the above diagram is commutative, $x_3$ maps
to the trivial bundle in $H^1(A,\O^*_A)$. Therefore there exists
a $x_4$ in $H^1(A,\Z)$ which maps to $x_3$. Since the first column
is an isomorphism and the second is injective, we conclude that
$x_4\in H^1(U,\Z)\cong H^1(A,\Z)$ maps to $x_2$ and so $x_2$ maps to
$x_1=0$ in $H^1(U, \O_X^*)$.  
\end{proof}

Now, we give some applications of Proposition \ref{nashenas}. Let us assume that $(X,A)$ has a transverse foliation namely $\F$. 
The normal bundle $N$
of $A$ in $X$
 has a meromorphic global section namely $s$. Let
\[
{\rm div}(s)=\sum n_iD_i,\ n_i\in\Z.
\]
We define the divisor $D$ in $X$ as follows:  
\begin{equation}
 \label{omidimpa}
D=A-\sum n_i\tilde D_i,
\end{equation}
where $\tilde D_i$  is the saturation of $D_i$ by $\F$.  
The line bundle $L_D$ associated to $D$  restricted to $A$
is the trivial line bundle, and so by Proposition \ref{nashenas},
$L_D$ is trivial or equivalently
\begin{prop}
\label{ilan}
Under the hypothesis of Proposition \ref{restriction}, there exists a meromorphic function $g$ on $(X,A)$ with
$$
{\rm div}(g)=D
$$
where $D$ is given by (\ref{omidimpa}).
\end{prop}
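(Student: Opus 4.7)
The plan is to show that the line bundle $L_D$ associated to $D$ is holomorphically trivial on a neighborhood of $A$, and then recover $g$ as the ratio of the canonical meromorphic section of $L_D$ (whose divisor is $D$) against a chosen trivialization. The triviality of $L_D$ itself will be reduced to triviality of $L_D|_A$ via the injectivity statement of Proposition \ref{nashenas}.

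First I would check that $L_D|_A$ is the trivial bundle on $A$. The adjunction formula gives $\O_X(A)|_A = N$. Because $\F$ is transverse to $A$ and $\tilde D_i$ is the saturation of $D_i \subset A$ by the leaves of $\F$, in a flow-box chart the local equation of $D_i$ extends trivially along leaves, so scheme-theoretically $\tilde D_i \cap A = D_i$. Consequently $\O_X(\tilde D_i)|_A = \O_A(D_i)$, and
$$
L_D|_A \;=\; N \otimes \bigotimes_i \O_A(D_i)^{-n_i}.
$$
The meromorphic section $s$ of $N$ with ${\rm div}(s) = \sum n_i D_i$ furnishes an isomorphism $N \cong \O_A\bigl(\sum n_i D_i\bigr)$, and hence $L_D|_A$ is trivial.

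Next I would apply Proposition \ref{nashenas}: the hypothesis $H^1(A, N^{-\nu}) = 0$ guarantees that the restriction map $H^1(U, \O_X^*) \to H^1(A, \O_A^*)$ is injective on a suitable strongly pseudoconvex neighborhood $U$ of $A$. Since $L_D|_A$ is trivial, so is $L_D$ on $U$. Choose a nowhere-vanishing holomorphic section $\sigma \in H^0(U, L_D)$; then dividing the canonical meromorphic section $s_D$ (characterized by ${\rm div}(s_D) = D$) by $\sigma$ yields the desired meromorphic function $g = s_D/\sigma$ on $(X,A)$ with ${\rm div}(g) = D$.

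The main obstacle I anticipate is conceptual rather than computational: one must be sure that the saturation $\tilde D_i$ is well-defined as a Cartier divisor in a neighborhood of $A$, not merely as a set. This requires the transversality of $\F$ to $A$ so that in any flow-box chart the local equations of $D_i$ extend uniquely along leaves, and it requires checking that the resulting local equations patch consistently across charts because the leaves are unique. Once this is secured, the adjunction identity and the intersection equality $\tilde D_i \cap A = D_i$ are automatic, and the remainder of the argument is a direct invocation of Proposition \ref{nashenas}.
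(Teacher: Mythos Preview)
Your proposal is correct and follows essentially the same approach as the paper: show $L_D|_A$ is trivial, invoke Proposition~\ref{nashenas} to conclude $L_D$ is trivial on $(X,A)$, and read off $g$. The paper's argument is considerably more terse---it simply asserts that $L_D|_A$ is trivial by construction and applies Proposition~\ref{nashenas}---whereas you spell out the adjunction computation and the Cartier-divisor well-definedness of $\tilde D_i$, which are reasonable points to make explicit.
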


We give another application of Proposition \ref{nashenas}. 


\begin{theo}
\label{27jan01}
Let $A$ be a strongly exceptional codimension one submanifold  of $X$. Further, assume that
\begin{equation}
\label{omidhaa}
H^1(A, N^{-\nu})=0,\ \forall \nu=1,2,\ldots
\end{equation}
Any transverse holomorphic foliation in $(X,A)$ is biholomorphic to the canonical transverse foliation of $(N,A)$ by the fibers of $N$. In particular,  the germs of any two holomorphic  transverse foliations  in $(X,A)$ are equivalent.    
\end{theo}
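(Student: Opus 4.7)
The plan is to construct an explicit biholomorphism $\Phi:(X,A)\to(N,A)$ that carries the leaves of $\mathcal{F}$ to the fibers of $N$. Since $\mathcal{F}$ is one-dimensional and transverse to $A$, each leaf meets $A$ transversally at a single point, so $\mathcal{F}$ defines a holomorphic projection $\pi:(X,A)\to A$ along the leaves; the biholomorphism I build will satisfy $\pi_N\circ\Phi=\pi$, which automatically identifies leaves with fibers.

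Pick a (nonzero) global meromorphic section $s$ of $N$ on $A$, which exists since $A$ is compact projective, and write $\mathrm{div}(s)=\sum n_iD_i$. Proposition~\ref{ilan}, whose cohomological hypothesis is precisely (\ref{omidhaa}), yields a meromorphic function $g$ on $(X,A)$ with $\mathrm{div}(g)=A-\sum n_i\tilde D_i$, where $\tilde D_i$ denotes the $\mathcal{F}$-saturation of $D_i$. I would then define
\[
\Phi(x)\ :=\ g(x)\cdot s(\pi(x))\ \in\ N_{\pi(x)}.
\]

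The key step, and the part I expect to require the most care, is verifying that $\Phi$ is in fact holomorphic throughout a neighborhood of $A$. Because $\tilde D_i=\pi^{-1}(D_i)$, the pull-back $s\circ\pi$, viewed as a section of $\pi^{*}N$, vanishes to order $n_i$ along $\tilde D_i$, which exactly cancels the order-$n_i$ pole of $g$ along $\tilde D_i$; the divisor of $\Phi$ as a section of $\pi^{*}N$ therefore reduces to $A$. A local computation near any point of $A\cap\tilde D_i$ with coordinates chosen so that $A=\{x_1=0\}$, $\tilde D_i=\{u_n=0\}$, and the leaves are $\{u=\mathrm{const}\}$, gives $\Phi=x_1\,h\,\tilde s(\pi(\,\cdot\,))$ with $h$ and $\tilde s$ nonvanishing units, confirming both the holomorphy and that $\Phi$ vanishes to order exactly one along $A$.

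Finally, $\Phi|_A$ is the zero section of $N$, and the derivative of $\Phi$ in the leaf direction at points of $A$ is nonvanishing (since $g$ has order one on $A$ and the leaves are transverse to $A$). By the inverse function theorem together with the compactness of $A$, $\Phi$ is a biholomorphism from a sufficiently small neighborhood of $A$ in $X$ onto one in $N$, and by the relation $\pi_N\circ\Phi=\pi$ it intertwines $\mathcal{F}$ with the canonical fiber foliation of $N$. The main obstacle in this argument is precisely the cancellation of the singularities of $g$ and $s\circ\pi$ along the $\tilde D_i$, which is exactly what the specific divisor produced by Proposition~\ref{ilan} is engineered to provide.
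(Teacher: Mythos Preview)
Your argument is correct and follows essentially the same strategy as the paper: both proofs rest on the meromorphic function $g$ supplied by Proposition~\ref{ilan}, and both produce a biholomorphism that is the identity on $A$, sends leaves to fibers, and relates $g$ to the corresponding function on $(N,A)$. The only difference is presentational: the paper characterizes the biholomorphism locally by these three properties and glues via uniqueness, whereas you write down the global formula $\Phi(x)=g(x)\,s(\pi(x))$ directly and verify it works --- your map is exactly the paper's $\psi$.
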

 This theorem in the case in which $A$ is a Riemann surface  is proved in 
\cite{cam}. 
\begin{proof}
Let $\F$ be the germ of a transverse foliation in $(X,A)$ and
$N$ the normal bundle of $A$ in $X$. Let also $\F'$ be the canonical transverse foliation of $(N,A)$. 
Let $g$ (resp. $g'$) be the meromorphic function constructed in Proposition \ref{ilan} for the pair $(X,A)$ resp. $(N,A)$.
 We claim that at each point $a\in A$ there exists a unique biholomorphism
\[
\psi_a:(X,A,a)\rightarrow (N,A,a)
\]
with the following properties: 
\begin{enumerate}
\item
$\psi$ induces the  identity map on $A$;
\item
$\psi$ sends $\F$ to $\F'$; 
\item
The pullback of $g'$ by $\psi$ is $g$. 
\end{enumerate}
The uniqueness property implies that these local biholomorphisms are restrictions of a global biholomorphism 
$\psi:(X,A)\rightarrow (N,A)$ which sends $\F$ to $\F'$.

Now we prove our claim. Fix a coordinates system $x=(x_1,x_2,\cdots,x_n)$ in a
neighborhood of $a$ in $A$. We extend $x$ to a coordinates system $(x,x_{n+1})$
of a neighborhood of $a$ in $X$ such that $A$ (resp. $\F$) in this 
coordinates system is given by $x_{n+1}=0$ (resp. $dx_i=0,\ i=1,2,\ldots,n$). In this coordinate system 
$$
g(x,x_{n+1})=Q(x)x_{n+1}f(x,x_{n+1}), 
$$ 
where $Q(x)$ is a meromorphic function in a neighborhood of $a$ in $A$ and it does not depend on the choice of an embedding of $A$ and $f$ is a holomorphic function in $(X,a)$ without zeros.
By changing the coordinates in $x_{n+1}$ we can assume that $f=1$.
 It is easy to check that the coordinate system $(x,x_{n+1})$ is unique and it gives us the local biholomorphism $\psi_a$. 
\end{proof}

\section{Blow up along a submanifold}
\label{blowup}
Let $N$ be a vector bundle of rank $m+1$ over $A$ and let $\tilde A:=\P(N)$ be the projectivization of the fibers of $N$. We have a canonical projection map 
$\pi : \tilde A\to A$ with fibers isomorphic to $\P ^m$. The space $\tilde A$ carries a distinguished line bundle $\tilde N$ which is defined by:
$$
\tilde N_x=\text{ the line representing $x$ in the vector space $N_{\pi(x)}$},\ \ x\in\tilde A
$$
In some books the notation $\O_{\tilde A}(-1)$  is used to denote the sheaf of sections of $\tilde N$ because the line bundle $\tilde N$ is the tautological bundle  
restricted to the fibers of $\pi$. It has the following properties:
$$
\pi_*(\O(\tilde N^{-\nu}))\cong \O(N^{-\nu}),\ \nu=0,1,2,\ldots
$$
$$
\pi_*(\O(\tilde N^{\nu}))=0,\ \ \nu=1,2,\ldots
$$
$$
H^q({\tilde A}, \pi^*(\S)\otimes \O(\tilde N^{-\nu}))\cong H^q(A,  \S\otimes \O(N^{-\nu})),
\ \nu=1,2,\ldots
$$
for every locally free sheaf $\S$ on $A$ (see \cite{grpere}, p. 178). Here $\O$ of a bundle means the sheaf of its sections. 
When there is no ambiguity between a bundle and the sheaf of its sections we do not write $\O$. We will also use the following: if for a sheaf of abelian groups $\S$
on $\tilde A$ we have $R^i\pi_*(\S)=0$ for all $i=1,2,\ldots$, then
$$
H^i(\tilde A,\S)\cong H^i(A,\pi_*\S),\ i=0,1,2,\ldots.
$$  
We will apply this for the sheaf of sections of $T\P^m\otimes \tilde N^{-\v},\ \v=1,2,\ldots$, where $T\P^{m}$ is the subbundle of $T\tilde A$ corresponding to vectors tangent to the fibers of $\pi$. 

By definition $\tilde N$ is a subbundle of $\pi^*N$ and we have the short exact sequence:
\begin{equation}
0\to \tilde N\to \pi^*N\to T\P^m\to 0.
\end{equation}
 We take $\O$ of the above sequence, make a tensor product with $\O(\tilde N^ {-\v}),\ \v=1,2,\ldots$ and 
apply $\pi_*$: we get
\begin{equation}
 \label{18nov}
0\to N^{-\v+1}\to N\otimes N^ {-\v}\to \pi_*(T\P^m\otimes \tilde N^ {-\v})\to 0 
\end{equation}
(for simplicity we have not written $\O(\cdots)$).
Note that $R^1\pi_*\O(\tilde N^{-\v+1})=0,\ \v=1,2,\ldots$. Note also that if $N$ is not a line bundle then  $N\otimes N^ {-1}$ may not be the trivial bundle.

The vector bundle $T\P^m$ appears also in the short exact sequence:
\begin{equation}
\label{18nov2010}
0\to \O(T\P^m)\to \O(T\tilde A) \to \pi^{*}\O(TA)\to 0,
\end{equation}
where $\O(T\tilde A) \to \pi^{*}\O(TA)$ is the map obtained by derivation of $\tilde A\to A$ and then considering the pull-back of $\O(TA)$.

Let $A$ be a compact submanifold of $X$ with
$$
n=\dim(A),\ m+1=\dim(X)-n.
$$
and let $N=TX\mid_A/TA$ be the normal bundle of $A$ in $X$. We make the blow up of $X$ along $A$:
$$
\pi :\tilde X\to X,\ \tilde A:=\pi^{-1}(A)= \P(N). 
$$
The normal bundle of $\tilde A$ in $\tilde X$ is in fact:
$$
\tilde N=N_{\tilde X/\tilde A}\cong \O_{\tilde A}(-1).
$$
Combining all these with Proposition \ref{nashenas}, we get the same proposition without the codimension restriction: 
\begin{prop}
\label{nashenas1}
Let $A$ be a strongly exceptional complex submanifold of $X$.
Moreover, suppose that 
$$
H^1(A,N^{-\nu})=0,\ \ \nu=1,2,3,\ldots
$$ 
where $N$ is the normal 
bundle of the embedding and $N^{-1}$ is the dual bundle. 
The restriction map 
$$
r:H^1(X,\O_X^*)\rightarrow H^1(A,\O_A^*)
$$ 
is injective. 
\end{prop}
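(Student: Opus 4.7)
The plan is to reduce the statement to the codimension-one case already established in Proposition \ref{nashenas}. Let $\pi:\tilde X\to X$ be the blow-up of $X$ along $A$, with exceptional divisor $\tilde A=\pi^{-1}(A)=\P(N)$, a codimension-one submanifold of $\tilde X$ whose normal bundle is $\tilde N=\O_{\P(N)}(-1)$, as recorded in Section \ref{blowup}. The idea is to apply Proposition \ref{nashenas} to the pair $(\tilde X,\tilde A)$ and then descend the resulting injectivity statement along $\pi$.

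To this end I first verify the hypotheses of Proposition \ref{nashenas} for $(\tilde X,\tilde A)$. Exceptionality of $\tilde A$ in $\tilde X$ follows by composing $\pi$ with the contraction $X\to X'$ of $A$ to a point, which collapses $\tilde A$ to a point. Negativity of $\tilde N$ follows from that of $N$: the total space of $\tilde N$ is identified with the blow-up of the total space of $N$ along its zero section, so the zero section of $\tilde N$ can be blown down first onto that of $N$ and then to a point. For the cohomological vanishing, the identity
\[
H^q(\tilde A,\pi^*\S\otimes \O(\tilde N^{-\nu}))\cong H^q(A,\S\otimes \O(N^{-\nu})),\quad \nu\geq 1,
\]
from Section \ref{blowup}, applied to $\S=\O_A$, gives $H^1(\tilde A,\O(\tilde N^{-\nu}))\cong H^1(A,\O(N^{-\nu}))=0$ for every $\nu\geq 1$. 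Proposition \ref{nashenas} therefore yields injectivity of the restriction $\tilde r:H^1(\tilde X,\O_{\tilde X}^*)\to H^1(\tilde A,\O_{\tilde A}^*)$.

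The final step is a descent argument through the commutative square with rows $r,\tilde r$ and columns given by the pullbacks $\pi^*$ and $(\pi|_{\tilde A})^*$. Given $L\in H^1(X,\O_X^*)$ with $r(L)=0$, commutativity implies that $(\pi^*L)|_{\tilde A}$ is trivial, and injectivity of $\tilde r$ then forces $\pi^*L$ itself to be trivial on $\tilde X$. To conclude $L\cong\O_X$, I would invoke the projection formula together with $\pi_*\O_{\tilde X}=\O_X$: a nowhere-vanishing global section $s$ of $\pi^*L$ corresponds to a global section $\bar s$ of $L$, which is nonzero off $A$ because $\pi$ is biholomorphic there, and nonzero at each $p\in A$ because otherwise $s$ would vanish identically on the fiber $\pi^{-1}(p)\cong\P^m$, along which $\pi^*L$ is canonically trivial with stalk $L_p$. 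The two technical points requiring care are the negativity of $\tilde N$ (routine once $\tilde N$ is identified with the blow-up of $N$ along its zero section) and this fiber-constancy descent; everything else is bookkeeping around the blow-up formulas of Section \ref{blowup}.
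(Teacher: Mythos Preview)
Your proposal is correct and follows exactly the route the paper indicates: the paper's proof of Proposition~\ref{nashenas1} is the single sentence ``Combining all these with Proposition~\ref{nashenas}, we get the same proposition without the codimension restriction,'' and you have correctly supplied the details of that combination---verifying that $(\tilde X,\tilde A)$ satisfies the hypotheses of Proposition~\ref{nashenas} via the blow-up identities of Section~\ref{blowup}, and then descending the triviality of $\pi^*L$ to $L$ using $\pi_*\O_{\tilde X}=\O_X$ and fiber-constancy. The descent step you flag as ``requiring care'' is indeed something the paper leaves entirely implicit.
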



\section{Proof of Theorem \ref{theo1}}
\label{pr2}
First we prove that there is a holomorphic vector field $V$ on $(X,A)$ tangent to the foliation $\F_1$ and singular at $A$. 
Indeed, by our hypothesis such a vector field exists locally. Thus there is a finite covering $(U_i)_{i\in I}$ of $(X,A)$ and for each $i\in I$ a vector field $V_i$ on $U_i$ such that at any 
$p\in A\cap U_i$,  $DV_i(p)$ has $n$ eigenvalues equal to zero ( along the direction of $A$), and eigenvalues  $\{\lambda_1, ..., \lambda _{m+1}\}$ whose convex hull does not contain 
$0\in \bold C$. On each nonempty intersection $U_i\cap U_j\neq\emptyset$ we have $V_i=f_{ij}V_j$, where the cocycle $L=\{f_{ij}\}$ is a line bundle. We write  the linear part of 
$V_i=f_{ij}V_j$ and we conclude that  
$f_{ij}|_A=1$. This means that the restriction of $L$ to $A$ is the trivial bundle. The collection of vector fields $V_i$, $i\in I$, defines a global section of $TX\otimes L$ and by Proposition 3,  $L$ is a trivial bundle. 

On the other hand, if $V$ and $\tilde V$ are vector fields tangent to $\F_1$ on $(X,A)$ and to its linear part $\tilde \F_1$ on $(N,A)$, respectively, by Poincar\'e theorem we know that locally there exists a unique biholomorphism $f_p:(X,A,p)\to (N,A,p)$ conjugating $V$ to $\tilde V$. Since the $f_p$'s are unique we conclude that they coincide in their common domains of definition, and hence, they give us a biholomorphism $f:(X,A)\to (N,A)$ conjugating $V$ to $\tilde V$.

\section{Proof of Theorem \ref{theo2}, codimension one}
\label{linefield}
In this section $A$ is a codimension one submanifold of $X$, $N$ is the normal bundle of $A$ in $X$ and $TA$ is the tangent bundle of $A$. 
\begin{prop}
\label{25sept09}
Assume 
\begin{equation}
 \label{naturalizacao}
H^1(A,N^{-1}\otimes TA)=0.
\end{equation} 
Then the pair $(TA\subset TX|_{A})$ is split, that is 
$$
TX|_A\cong N\oplus TA.
$$ 
\end{prop}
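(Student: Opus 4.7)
The plan is to interpret the claim as the vanishing of an extension class. On $A$ we have the canonical short exact sequence of holomorphic vector bundles coming from the embedding of $A$ in $X$,
$$
0 \to TA \to TX|_A \to N \to 0,
$$
and the proposition simply asserts that this sequence splits. Since $N$ is locally free, isomorphism classes of extensions of $N$ by $TA$ are classified by $\mathrm{Ext}^1_{\O_A}(N, TA)$, and the local-to-global spectral sequence collapses to give
$$
\mathrm{Ext}^1_{\O_A}(N, TA) \cong H^1(A, \mathcal{H}om(N, TA)) \cong H^1(A, TA \otimes N^{-1}),
$$
where in the last identification we use that $N$ has rank one (this is where the codimension-one hypothesis is used, so that $N^{-1}$ literally equals $N^{\vee}$). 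By the assumption (\ref{naturalizacao}) this group is zero, hence the extension class of the sequence above is trivial and the sequence splits, yielding $TX|_A \cong N \oplus TA$.

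If one prefers a \v{C}ech-style argument that avoids invoking $\mathrm{Ext}$ abstractly, I would proceed as follows. Choose a cover $(U_i)_{i\in I}$ of $A$ by open sets over which $TX|_A$ trivializes and on each $U_i$ pick a holomorphic splitting $s_i\colon N|_{U_i} \to TX|_{U_i}$ of the projection $TX|_A \to N$. On overlaps the difference $s_i - s_j$ takes values in the kernel $TA$, producing a cocycle
$$
\{s_i - s_j\} \in Z^1\bigl((U_i), \mathcal{H}om(N, TA)\bigr) = Z^1\bigl((U_i), TA\otimes N^{-1}\bigr).
$$
Its class in $H^1(A, TA \otimes N^{-1})$ is precisely the obstruction to modifying the local splittings by sections $t_i \in \Gamma(U_i, \mathcal{H}om(N, TA))$ with $s_i - t_i = s_j - t_j$ on overlaps, which would produce a global splitting. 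The hypothesis (\ref{naturalizacao}) makes this obstruction vanish, and the adjusted local splittings glue to the desired global splitting $N \to TX|_A$.

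There is no serious obstacle here: the argument is a direct application of the standard cohomological classification of short exact sequences of locally free sheaves, and the only point to check carefully is the identification $\mathcal{H}om(N,TA) \cong TA \otimes N^{-1}$, which holds because $N$ is a line bundle in the codimension-one setting of this section.
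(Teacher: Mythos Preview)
Your proof is correct and follows essentially the same approach as the paper: the paper also produces local maps $Y_i\colon N|_{U_i}\to TX|_{U_i}$, normalizes them so that the compositions with $TX|_A\to N$ agree, and then observes that the differences $Y_i-Y_j$ give a cocycle in $H^1(A,\mathrm{Hom}(N,TA))\cong H^1(A,N^{-1}\otimes TA)$, which vanishes by hypothesis. Your \v{C}ech version is in fact slightly cleaner, since by choosing the $s_i$ to be local splittings from the outset you avoid the paper's intermediate step of arguing that the line bundle $\{a_{ij}\}$ relating the compositions $\tilde Y_i$ is trivial.
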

\begin{proof}
It is enough to construct a vector bundle morphism $Y:N\to TX|_A$ with the image transverse to $TA$.  First, we construct $Y$ locally, i.e. we find $Y_i: N|_{U_i} \rightarrow TX|_{U_i}$ with the desired property for an open covering $U_i, \ i\in I$ of $A$. Let $\tilde Y_i$ be the composition $N|_{U_i} \rightarrow TX|_{U_i}\to N|_{U_i}$. Then $\tilde Y_i=a_{ij}\tilde Y_j$, where  $\{a_{ij}\}\in H^1(A,\O^*_A)$ is a line bundle. Now, $\tilde Y_i$'s are sections of the trivial bundle $N^{-1}\otimes N$ with no zeros and so $\{a_{ij}\}$ is a trivial bundle and so  we can assume that $\tilde Y_i=\tilde Y_j$. Now
$$
\{Y_{ij}\}:=\{Y_i-Y_j\}\in H^1(A, {\rm Hom}(N,TA)).
$$
Since ${\rm Hom}(N,TA)\cong N^{-1}\otimes TA$, our assertion follows by the vanishing hypothesis (\ref{naturalizacao}). 
\end{proof}
If $A$ is a curve  then we can use the Serre duality and the cohomological condition (\ref{naturalizacao}) follows from:
$$
A\cdot A<4-4g.
$$
Let $\F$ be a non singular transverse foliation by curves in $(X,A)$. We have the canonical embedding 
$$
T\F|_A\cong N\hookrightarrow TX|_A,
$$
In Proposition \ref{25sept09} we constructed a transverse embedding $N\to TX|_A$  and it is natural to ask whether it comes from a holomorphic foliation as above.
\begin{prop}
\label{31aug09}
 Assume  that $A$ is strongly exceptional codimension one submanifold of  $X$ and 
\begin{equation}
\label{1oct09}
H^1(A, N^{-\nu}\otimes TX\mid_A)=0, \ \ \nu=2,3,\ldots
\end{equation}
Any transverse embedding  $N\to TX|_A$ is associated to  a non singular  transverse foliation $\mathcal F$ defined in a neighborhood of $A$.
\end{prop}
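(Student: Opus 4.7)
The transverse embedding $N\hookrightarrow TX|_A$ amounts to a line subbundle $L_0\subset TX|_A$ together with the splitting $TX|_A = L_0\oplus TA$ (so $L_0\cong N$ and $TX|_A/L_0\cong TA$). My plan is to extend $L_0$ to a holomorphic line subbundle $L$ of $TX$ on a neighborhood of $A$: being of rank one, $L$ is automatically integrable and defines a non-singular foliation $\F$, transverse to $A$ by continuity from $L_0$. The construction proceeds by formal extension of $L_0$ along $A$, followed by analytic realization on an open neighborhood.

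For the formal part, write $A_\nu=(A,\O_X/\M^{\nu+1})$ and suppose inductively that a line subbundle $L^{(\nu-1)}\subset TX|_{A_{\nu-1}}$ extending $L_0$ has been built. Local extensions to $A_\nu$ always exist, and two local extensions differ at order $\nu$ by a section of $\mathrm{Hom}(L_0,TX|_A/L_0)\otimes\M^\nu/\M^{\nu+1}$. The standard obstruction to gluing local extensions into a global $L^{(\nu)}$ therefore lies in
$$
H^1\bigl(A,\,\mathrm{Hom}(L_0,TX|_A/L_0)\otimes\M^\nu/\M^{\nu+1}\bigr)\;\cong\;H^1(A,\,TA\otimes N^{-\nu-1}),
$$
using $L_0\cong N$, $TX|_A/L_0\cong TA$, and the codimension-one identification $\M^\nu/\M^{\nu+1}\cong N^{-\nu}$. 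Since $TA$ is a direct summand of $TX|_A$, hypothesis (\ref{1oct09}) applied with parameter $\nu+1\geq 2$ yields $H^1(A,TA\otimes N^{-\nu-1})=0$ for every $\nu\geq 1$; all obstructions vanish and a formal line subbundle of $TX$ along $A$ is produced.

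It remains to promote this formal object to a genuine holomorphic line subbundle on an open neighborhood of $A$. For this I would invoke Grauert's vanishing theorem~\ref{25mar01} on a strongly pseudoconvex neighborhood $U$: applied to the coherent sheaf $TX$, it gives $H^1(U,TX\cdot\M^\nu)=0$ for all large $\nu$, whence the restriction $H^0(U,TX)\to H^0(U,TX/\M^\nu\cdot TX)$ is surjective. Combined with the order-by-order obstruction vanishing, a standard iterative-correction argument then realizes the formal local generators $V_i$ and transition functions $g_{ij}$ analytically on $U$, with $V_i=g_{ij}V_j$; their span is the desired $L\subset TX|_U$. The main obstacle is precisely this convergence step: the cohomological obstruction calculation is routine, but turning a formal line subbundle into an analytic one depends essentially on the strongly exceptional character of the embedding, entering through Grauert's theorem.
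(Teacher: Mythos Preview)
Your approach is correct in spirit but more circuitous than the paper's. You proceed in two stages---first building a formal line subbundle order by order (with obstructions in $H^1(A, TA\otimes N^{-\nu-1})$), then invoking Grauert's theorem to upgrade the formal object to an analytic one---and you rightly flag the second stage as the delicate point, leaving it as a sketch.

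The paper collapses both stages into a single application of Grauert's theorem. One picks local nonvanishing sections of the image of $N\hookrightarrow TX|_A$ and extends them \emph{arbitrarily} to holomorphic vector fields $X_i$ on open sets $U_i\subset X$ covering a neighborhood of $A$. After fixing an extension $\tilde N=\{\tilde f_{ij}\}$ of $N$ to $(X,A)$ via local defining equations $f_i$ for $A$, the differences $\Theta_{ij}=X_i-\tilde f_{ij}X_j$ vanish on $A$ and define a \v{C}ech $1$-cocycle with values in $\M\otimes TX\otimes\tilde N^{-1}$. Now Theorem~\ref{25mar01}, applied to $\S=TX\otimes\tilde N^{-1}$, gives $H^1(U,\S\M)=0$ outright: the hypothesis~(\ref{1oct09}) says exactly that $H^1(A,\tilde\S\otimes N^{-\nu})=H^1(A,TX|_A\otimes N^{-\nu-1})$ vanishes for every $\nu\ge 1$, so the $\nu_0$ in Grauert's theorem may be taken equal to $1$. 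Hence $\{\Theta_{ij}\}$ is a coboundary, and the $X_i$ can be corrected by vector fields vanishing on $A$ so that $X_i=\tilde f_{ij}X_j$ holds exactly on $U$.

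What this buys: the paper never needs to construct a formal object or argue convergence. Your order-by-order obstruction vanishing is precisely the input that forces $\nu_0=1$ for the sheaf $\S=TX\otimes\tilde N^{-1}$; once that is observed, Grauert's theorem finishes in one stroke. Your convergence sketch (surjectivity of $H^0(U,TX)\to H^0(U,TX/\M^\nu\cdot TX)$ for large $\nu$ plus an ``iterative-correction'' argument) would, if carried out, essentially rebuild this cocycle computation---but with extra bookkeeping for separate local generators $V_i$ and transition functions $g_{ij}$ that the paper sidesteps by twisting with $\tilde N^{-1}$ and treating the data as a single global section.
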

\begin{proof}
We take local sections of $N$ which trivialize $N$ and have no zero point. The images of these sections under 
$N\subset TX|_A$ can be extended to vector fields  $X_i$ defined in $U_i,i\in I$, where $\{U_i\}_{i\in I}$  is a covering of $(X,A)$.
Therefore, 
$$
X_i|_A=f_{ij}X_j|_A,\ N^{-1}=\{f_{ij}\}.
$$
The normal bundle $N$ of $A$ in $X$ extends to a line bundle $\tilde N$ in $(X,A)$ as follows: We take local holomorphic functions $f_i$ in $(X,A)$ such that  $A=\{f_i=0\}$. Now $f_i=\tilde f_{ij}f_j$ and $\tilde N=\{\tilde f_{ij}\}$ is a 
line bundle in  $(X,A)$ which restricted to $A$ is the normal bundle. Now
$$
\{\Theta_{ij}\}=\{X_i-\tilde f_{ij}X_j\}\in H^1(X, \M_A \otimes TX \otimes N^{-1} ).
$$ 
By our hypothesis and Theorem \ref{25mar01} 
the cohomology group in the right hand side is zero.  
\end{proof}

 Using the long exact sequence of $$
0\to TA\otimes N^{-\nu}\to TX|_A\otimes N^{-\nu}\to N^{-\nu+1}\to 0,
$$
 one can see easily that the hypothesis (\ref{1oct09}) together with (\ref{naturalizacao}) follows from:
\begin{equation}
\label{01oct09}
H^1(A,N^{-v}\otimes TA)=0,\ \ H^1(A,N^{-v})=0,\ \nu=1,2,\ldots
\end{equation}
For the case in which $A$ is a Riemann surface, we use Serre duality and (\ref{01oct09}) follows from:
$$
A\cdot A<4-4g \hbox{ for }g\geq 1 \hbox{   and   } A\cdot A< 2 \hbox{ for } g=0.
$$
In this case, Propositions \ref{25sept09} and \ref{31aug09} and their generalization to foliations with tangencies were proved in \cite{ho19}.


\section{Proof of Theorem \ref{theo2}, codimension greater than one}
\label{linefield2}
We perform blow-up along $A$. Recall the notation introduced in Section \ref{blowup}. We would like to construct a transverse holomorphic foliation in 
$(\tilde X,\tilde A)$. This is already done in the previous section. We need the cohomological conditions:
\begin{equation}
H^1(\tilde A,\tilde N^{-\v}\otimes T\tilde A)=0,\ \ H^1(\tilde A,\tilde N^{-\v})=0,\ \nu=1,2,\ldots
\end{equation}
Now, we would like to translate all these in terms of the data of the embedding $A\subset X$. First, note that
$$
H^1(\tilde A,\tilde N^{-\v})\cong H^1(A,N^{-\v}).
$$
We make the tensor product of the sequence (\ref{18nov2010}) with $\tilde N^\v$ and write the long exact cohomology sequence. We conclude that if
$$
H^1(\tilde A, T\P ^m\otimes \tilde N^{-\v})=0,\ H^1(A, TA\otimes N^{-\v})=0,\ \ \nu=1,2,\ldots.
$$
then 
$$
H^1(\tilde A, T\tilde A\otimes \tilde N^{-\v})=0,\ \ \nu=1,2,\ldots
$$
Since $R^1\pi_* (T\P ^m\otimes \tilde N^{-\v})=0, \ \v=1,2,\ldots$, we have 
$$
H^1(\tilde A, T\P ^m\otimes \tilde N^{-\v})=H^1(A, \pi_*(T\P ^m\otimes \tilde N^{-\v})).
$$ 
We write the long exact sequence of (\ref{18nov}) and conclude that if 
$$
H^1(A, N\otimes N^{-\v})=0,\ H^2(A, N^{-\v+1})=0 ,\ \  \v=1,2,\ldots 
$$
then
$$
H^1(\tilde A, T\P ^m\otimes \tilde N^{-\v})=0,\ \ \v=1,2,\ldots 
$$
Finally we conclude that if
$$
H^1(A, N\otimes N^{-\v})=0,\ H^2(A, N^{-\v+1})=0  ,\ \ H^1(A, TA\otimes N^{-\v})=0 ,\  \nu=1,2,\ldots.
$$
then
$$
H^1(\tilde A, T\tilde A\otimes \tilde N^{-\v})=0,\ \nu=1,2,\ldots.
$$
\section{Proof of Theorem \ref{grauertfibrado}}
\label{pr3}
Using Theorem \ref{theo1}, it is enough to construct a second foliation $\F_1$ such that $(\F_1,\F_2)$ is a germ of radial bifoliation.
In codimension one, we have $\F_1=\F_2$ and so we can assume that $m>0$. After performing a blow-up along $A$  our problem is reduced to the following one: 
Let $\tilde A$ be a codimension one submanifold of $\tilde X$  and let $\tilde \F_2$ be a $(m+1)$-dimensional regular foliation in $X$ transverse to $A$. The transversality implies that $\tilde \F_2\cap \tilde A$ is a regular foliation of dimension $m$ in $\tilde A$.  In fact it is the foliation by the blow up divisors $\P ^m$. Its tangent bundle is denoted by $T\P ^m$ in Section \ref{blowup}. 
We would like to construct a transverse to $\tilde A$ foliation $\tilde \F_1$ of dimension one such that its leaves are contained in the leaves of 
$\tilde \F_2$. The proof is a slight modification of Proposition \ref{25sept09} and Proposition \ref{31aug09}. 
In both proposition $T X|_A$ is replaced with $T \tilde \F_2|_{\tilde A}$ and $T A$ is replaced 
with $T\P ^m$. In Proposition \ref{25sept09}, the cohomological condition is 
$$
H^1(\tilde A,\tilde N^{-1}\otimes T\P ^m)=0.
$$
which follows from the condition (II).


\def\cprime{$'$} \def\cprime{$'$} \def\cprime{$'$}

\end{document}